
\documentclass{amsart}
%
%
%
 \newtheorem{thm}{Theorem}[section]
 
 \newtheorem{lem}[thm]{Lemma}

 \theoremstyle{definition}
 \newtheorem{defn}[thm]{Definition}
 \theoremstyle{remark}

 \numberwithin{equation}{section}

\begin{document}

%
%
%
%
%
%
%
%
%

\title[Arithmetic properties of cubic and overcubic partition pairs]
 {Arithmetic properties of cubic and overcubic partition pairs}

\author{Chiranjit Ray}

\address{Department of Mathematics, Indian Institute of Technology Guwahati, Assam, India, PIN- 781039}
\email{chiranjitray.m@gmail.com}

\author{Rupam Barman}
\address{Department of Mathematics, Indian Institute of Technology Guwahati, Assam, India, PIN- 781039}
\email{rupam@iitg.ac.in}
\thanks{We are very grateful to Professor Ken Ono for careful reading of a draft of the
manuscript. The first author acknowledges the financial support of Department of Atomic Energy, Government of India for supporting a
	part of this work under NBHM Fellowship.}

\keywords{Cubic partition pair; overcubic partition; overcubic partition pair; modular forms}

\date{Auguest 10, 2018}
\dedicatory{}

\begin{abstract}
Let $b(n)$ denote the number of cubic partition pairs of $n$. We give affirmative answer to a conjecture of Lin, namely, we prove that $$b(49n+37)\equiv 0 \pmod{49}.$$
We also prove two congruences modulo $256$ satisfied by $\overline{b}(n)$, the number of overcubic partition pairs of $n$. Let $\overline{a}(n)$ denote the 
number of overcubic partition of $n$. For a fixed positive integer $k$, we further 
show that $\overline{b}(n)$ and $\overline{a}(n)$ are divisible by $2^k$ for almost all $n$. We use arithmetic properties of modular forms to prove our results.
\end{abstract}

\maketitle
\section{Introduction and statement of results}
In a series of papers \cite{chan1, chan2, chan3}, Chan studied the cubic partition function $a(n)$ with generating function given by
\begin{align*}
	\sum_{n=0}^{\infty}a(n)q^n &=\frac{1}{(q; q)_{\infty}(q^2; q^2)_{\infty}}, ~|q|<1,
\end{align*}
where $(a; q)_{\infty}:=\prod_{n\geq 0}(1-aq^n)$. The partition function $a(n)$ satisfies many interesting congruences. For example, it satisfies the 
following Ramanujan-like congruence
\begin{align*}
	a(3n+2)\equiv 0 \pmod{3}.
\end{align*}
Inspired by Chan's work, Zhao and Zhong \cite{zhao} studied the cubic partition pair function $b(n)$ which is defined by 
\begin{align*}
	\sum_{n=0}^{\infty}{b}(n)q^n&=\frac{1}{(q; q)_{\infty}^2(q^2; q^2)_{\infty}^2}.
\end{align*}
They established several Ramanujan-like congruences for $b(n)$ as follows:
\begin{align*}
 b(5n + 4) & \equiv 0 \pmod{5},\\
 b(7n + i) & \equiv 0 \pmod{7},\\
 b(9n + 7)& \equiv 0 \pmod{9},\\
\end{align*}
where $i=2, 3, 4, 6$. Recently, Lin \cite{lin2} studied the arithmetic properties of $b(n)$ modulo $27$. He also conjectured the following four congruences:
\begin{align}
\label{linconj-1} b(49n + 37) & \equiv 0 \pmod{49},\\
\label{linconj-2} b(81n + 61) & \equiv 0 \pmod{243},\\
\label{linconj-3} \sum_{n\geq 0}b(81n + 7)q^n & \equiv 9\frac{(q^2; q^2)_{\infty}(q^3; q^3)_{\infty}^2}{(q^6; q^6)_{\infty}} \pmod{81},\\
\label{linconj-4} \sum_{n\geq 0}b(81n + 34)q^n & \equiv 36\frac{(q; q)_{\infty}(q^6; q^6)_{\infty}^2}{(q^3; q^3)_{\infty}} \pmod{81}.
\end{align}
In two recent papers, Lin, Wang, and Xia \cite{lin3} and Chern \cite{chern}  independently proved \eqref{linconj-2}, \eqref{linconj-3} and \eqref{linconj-4}. 
In both the articles, it was proved that the congruence \eqref{linconj-2}
is in fact true modulo $729$. Recently, Hirschhorn \cite{hirs} also proved the congruence \eqref{linconj-2} modulo $729$. 
However, to the best of our knowledge the congruence \eqref{linconj-1} has not been established till date. 
\par In this paper, we prove that the Lin's conjecture \eqref{linconj-1} is true. 
\begin{thm} \label{thm1} For any non-negative integers $n$, we have
	\begin{align*}
	 b(49n+37)\equiv0\pmod{49}.
	\end{align*}
\end{thm}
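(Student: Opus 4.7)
My approach is to realize $\sum_{n\ge 0} b(n)q^n$ as the reduction modulo $49$ of an explicit eta-quotient, promote it to a holomorphic modular form on some $\Gamma_0(N)$, extract the subsequence indexed by $49n+37$ via Hecke-type $U$-operators, and finish using Sturm's bound.

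Exploiting the classical Frobenius-type identity $(q;q)_\infty^{p^k}\equiv(q^p;q^p)_\infty^{p^{k-1}}\pmod{p^k}$ with $p=7$ and $k=2$, I rewrite
\[
\sum_{n\ge 0}b(n)q^n=\frac{1}{(q;q)_\infty^{2}(q^2;q^2)_\infty^{2}}\equiv\frac{(q;q)_\infty^{47}(q^2;q^2)_\infty^{47}}{(q^7;q^7)_\infty^{7}(q^{14};q^{14})_\infty^{7}}\pmod{49}.
\]
After an appropriate $q$-shift, the right-hand side is an eta-quotient of integer weight $40$ on $\Gamma_0(14)$. I would then multiply by a carefully chosen auxiliary eta-quotient $g(\tau)$, of level dividing a multiple of $14\cdot 49$, to force the product $F(\tau)$ to be holomorphic at every cusp, verified through Ligozat's order formula. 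The result is a form $F\in M_k(\Gamma_0(N),\chi)$ with integer Fourier coefficients congruent, modulo $49$, to those of $g(\tau)\sum b(n)q^n$.

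Next, I would apply a $U_{49}$-type operator to $F$---equivalently, two iterations of $U_7$ combined with a twist by an additive character modulo $49$---to isolate the residue class $37$, producing a form
\[
H(\tau)=\sum_{n\ge 0}c(n)q^n,\qquad c(n)\equiv\alpha\cdot b(49n+37)\pmod{49},
\]
for some unit $\alpha$ modulo $49$, which again lies in an explicit space $M_{k'}(\Gamma_0(N'),\chi')$. Sturm's theorem then reduces the claim $H\equiv 0\pmod{49}$ to checking divisibility by $49$ of the first $\lfloor k'[\mathrm{SL}_2(\mathbf{Z}):\Gamma_0(N')]/12\rfloor$ Fourier coefficients, a finite computer computation.

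The principal difficulty lies in the choice of the auxiliary eta-quotient $g(\tau)$: it must simultaneously cancel the pole orders of the raw reduction at every cusp of $\Gamma_0(N)$, yield an integer weight, and keep the index $[\mathrm{SL}_2(\mathbf{Z}):\Gamma_0(N')]$ after applying $U_{49}$ low enough that the Sturm bound remains computationally feasible. With a well-chosen $g$, the remainder of the proof is a routine Fourier coefficient verification.
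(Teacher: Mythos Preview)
Your outline is sound and is essentially the classical ``Sturm bound'' route: reduce $\sum b(n)q^n$ modulo $49$ to an eta-quotient, clear cuspidal poles with an auxiliary factor, isolate the progression $49n+37$ with $U$-operators, and verify finitely many coefficients. The paper follows the same underlying philosophy but packages everything through Radu's algorithmic criterion (Lemma~\ref{lem1}) rather than a raw Sturm computation. Concretely, the paper uses the slightly different reduction
\[
\sum_{n\ge 0} b(n)q^n\equiv\frac{(q;q)_\infty^{47}}{(q^2;q^2)_\infty^{2}(q^7;q^7)_\infty^{7}}\pmod{49},
\]
applying the Frobenius trick only to the first factor, then takes $(m,M,N,r,t)=(49,14,14,(47,-2,-7,0),37)$, checks $P_{m,r}(37)=\{37\}$, chooses the auxiliary $r'=(12,0,0,0)\in R(14)$, and obtains the bound $\lfloor\nu\rfloor=48$; a machine check of $b(49n+37)$ for $0\le n\le 48$ finishes the proof. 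The payoff of Radu's framework is that the choice of auxiliary eta-quotient, the cusp analysis, and the level-raising under $U_{49}$ are all absorbed into the combinatorial $\Delta^*$ conditions and the single inequality $p_{m,r}(\gamma_\delta)+p^*_{r'}(\gamma_\delta)\ge 0$, yielding a verification bound of $48$. Your approach would work, but after passing to level a multiple of $14\cdot 49$ and weight at least $40$, the Sturm bound will be in the thousands rather than the dozens. One technical correction: for your eta-quotient one has $\sum_\delta \delta r_\delta=47+94-49-98=-6\not\equiv 0\pmod{24}$, so it is not modular on $\Gamma_0(14)$ as stated; you must first rescale (e.g.\ $q\mapsto q^4$), which further inflates the level and hence the Sturm bound.
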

We have $b(37)=80832850\not\equiv 0\pmod{7^3}$. Hence, unlike to Lin's conjecture \eqref{linconj-2}, the congruence \eqref{linconj-1} is best possible 
in the sense that the moduli cannot be replaced by higher power of $7$ such that the congruence holds for all $n\geq 0$.
\par 
In \cite{kim}, Kim introduced a partition function $\overline{b}(n)$ whose generating function is given by 
\begin{align}\label{gen-ocp}
	\overline{B}(q):=\sum_{n=0}^{\infty}\overline{b}(n)q^n&=\frac{(-q; q)_{\infty}^2(-q^2; q^2)_{\infty}^2}{(q; q)_{\infty}^2(q^2; q^2)_{\infty}^2}.
\end{align}
Kim named $\overline{b}(n)$ as the number of overcubic partition pairs of $n$. Using arithmetic properties of quadratic forms and modular forms, Kim \cite{kim} 
derived the following two congruences
\begin{align*}
 \overline{b}(8n + 7) &\equiv 0\pmod{64},\\
  \overline{b}(9n + 3) &\equiv 0\pmod{3}.
\end{align*}
In \cite{lin1}, Lin proved two Ramanujan-like congruences and several infinite families of congruences modulo $3$ satisfied by $\overline{b}(n)$. He also
obtained some congruences for $\overline{b}(n)$ modulo $5$. 
\par In this paper, we prove the following two congruences modulo $256$ satisfied by $\overline{b}(n)$.
\begin{thm} \label{thm2} For any non-negative integers $n$, we have
	\begin{align*}
	\overline{b}(72n+t)&\equiv~0\pmod{256},
	\end{align*}
	where $t\in \{42,66\}.$
\end{thm}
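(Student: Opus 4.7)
The plan is to implement the standard Serre--Ono modular-forms approach. As a first step, I would express the generating function as an eta quotient: using $(-q;q)_{\infty}=f_2/f_1$ and $(-q^2;q^2)_{\infty}=f_4/f_2$ (with $f_k:=(q^k;q^k)_\infty$), equation \eqref{gen-ocp} collapses to
\[
\overline{B}(q)=\frac{f_4^{\,2}}{f_1^{\,4}\, f_2^{\,2}}.
\]
Up to a rational power of $q$, $\overline{B}(q^{24})$ is therefore an eta quotient of weight $-2$, placing it in the natural habitat of the modular-forms machinery.

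Next, I would multiply $\overline{B}(q)$ by a suitable eta product $E(q)=f_1^{a_1}f_2^{a_2}f_4^{a_4}$ so that the result becomes a \emph{holomorphic} modular form of positive integer weight on some $\Gamma_0(N)$, while also arranging that $E(q)$ is itself congruent modulo $256$ to a simple, known eta product. The key tool is the classical congruence $f_1^{2^k}\equiv f_2^{2^{k-1}}\pmod{2^k}$, which in particular gives $f_1^{256}\equiv f_2^{128}\pmod{256}$; multiplying by high powers of $f_1$ or $f_2$ is therefore ``almost free'' modulo $256$. With the exponents $a_i$ chosen so that $E(q)\,\overline{B}(q)$ is a holomorphic eta quotient (checked cusp by cusp via Ligozat's criterion), we obtain a modular form $F(z)\in M_k(\Gamma_0(N),\chi)$ whose $q$-expansion modulo $256$ encodes the values $\overline{b}(n)$.

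To isolate the arithmetic progressions $72n+t$ for $t\in\{42,66\}$, I would then apply a Hecke $U$-operator -- either $U_{72}$ directly, or a composition such as $U_8\circ U_9$ together with suitable character twists -- preceded by the $q$-shift needed to send the target residue class to multiples of $72$. The result is, for each $t$, a modular form $H_t(z)\in M_{k'}(\Gamma_0(N'),\chi')$ with
\[
H_t(z)\equiv c_t \sum_{n\geq 0}\overline{b}(72n+t)\,q^{n}\pmod{256},
\]
for some explicit unit $c_t$ modulo $256$. The theorem reduces to $H_t\equiv 0\pmod{256}$, which by Sturm's theorem follows once divisibility by $256$ is verified for the first $\lfloor k'\,[\mathrm{SL}_2(\mathbb Z):\Gamma_0(N')]/12\rfloor$ Fourier coefficients -- a finite computation carried out by computer algebra.

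The main difficulty will be the second step: the eta product $E(q)$ must be picked so that $E(q)\,\overline{B}(q)$ is genuinely holomorphic at every cusp (Ligozat's conditions couple the exponents $a_i$ tightly), and so that the resulting level $N'$ and weight $k'$ keep the Sturm bound within feasible range. Once this balancing act is done, the two cases $t=42$ and $t=66$ are handled by the same scheme, differing only in the $q$-shift applied before the $U$-operator.
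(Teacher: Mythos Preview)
Your outline is a legitimate strategy, but it is only a plan: you never actually choose the eta multiplier $E(q)$, compute the level and weight, or carry out the Sturm check. In that sense it is not yet a proof. More importantly, the paper takes a genuinely different route, so let me contrast the two.

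The paper first does elementary $q$-series work before invoking any modular-forms machinery. Using the classical $2$-dissection of $1/f_1^4$ (Berndt, Entry~25), it extracts three successive bisections to obtain
\[
\sum_{n\ge 0}\overline{b}(8n+2)\,q^n \;\equiv\; 16\,\frac{f_1^{10}f_2^{6}}{f_4^{4}}\pmod{256}.
\]
This preliminary reduction peels off a factor of $16$ for free and replaces the original negative-weight eta quotient by a \emph{polynomial} eta product (all exponents bounded, weight~$6$, level dividing~$8$). Only then does the paper appeal to modular forms, and it uses Radu's algorithmic criterion (Lemma~\ref{lem1} here) rather than the plain Sturm bound you propose: with $(m,M,N,r,t)=(9,8,12,(10,6,-4,0),5)$ one gets $P_{m,r}(t)=\{5,8\}$, so both residues $t=42,66$ are handled simultaneously, and the verification bound is $\lfloor\nu\rfloor=11$---just twelve coefficients to check.

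Your direct $U_{72}$/Sturm scheme would also succeed in principle, but multiplying $\overline{B}(q)$ by something like $f_1^{256}/f_2^{128}$ to stay congruent mod~$256$ pushes the weight well above $60$, and the resulting Sturm bound on a level divisible by $72$ (or larger) will be orders of magnitude bigger than $11$. The paper's two-stage approach---elementary dissection first, Radu's method second---buys a tiny weight and level, hence a tiny finite check; your approach trades that economy for conceptual directness. Either is valid, but if you pursue yours you should actually exhibit $E(q)$, verify Ligozat's cusp conditions, and state the explicit Sturm bound rather than leaving these as exercises.
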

These two congruences are best possible since
\begin{align*}
\overline{b}(72+42) &= \overline{b}(114) = 173333430318331391232 \not \equiv 0 \pmod{512},\\
\overline{b}(66) &= 407868414339840 \not \equiv 0 \pmod{512}.
\end{align*}
\par For any fixed positive integer $k$, Gordon and Ono \cite{ono2} proved that the number of partitions of $n$ into distinct parts is divisible by $2^k$ for almost 
all $n$. Bringmann and Lovejoy \cite{bringman} showed that the number of overpartition pairs of $n$ is divisible by $2^k$ for almost all $n$. In \cite{lin4}, Lin 
proved that the number of overpartition pairs of $n$ into odd parts is also divisible by $2^k$ for almost all $n$.
\par In this article, we prove that the number of overcubic partition pairs of $n$ is divisible by $2^k$ for almost all $n$. 
\begin{thm} \label{thm3} Let $k$ be a positive integer. Then $\overline{b}(n)$ is almost always divisible by $2^k$, namely,
		\begin{align*}
	 \lim_{X\to\infty} \frac{\# \left\{n\leq X: \overline{b}(n)\equiv 0\pmod{2^k}\right\}}{X}=1.
	\end{align*}
\end{thm}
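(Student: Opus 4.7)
\textbf{Proof plan for Theorem~\ref{thm3}}. The plan is to view $\overline{B}(q)$ modulo $2^k$ as the Fourier expansion of a holomorphic modular form of positive integer weight, and then apply Serre's lacunarity theorem for modular forms modulo prime powers, as in the arguments of Gordon--Ono~\cite{ono2} and Bringmann--Lovejoy~\cite{bringman}. Using $(-q;q)_\infty=(q^2;q^2)_\infty/(q;q)_\infty$ in~\eqref{gen-ocp}, the generating function can be written as
\begin{align*}
\overline{B}(q)=\frac{(q^4;q^4)_\infty^2}{(q;q)_\infty^4(q^2;q^2)_\infty^2}.
\end{align*}
Writing $f_m:=(q^m;q^m)_\infty$, an elementary induction from $(1-x)^2\equiv 1-x^2\pmod{2}$ gives $(1-x)^{2^k}\equiv(1-x^2)^{2^{k-1}}\pmod{2^k}$, and consequently $f_a^{2^k}\equiv f_{2a}^{2^{k-1}}\pmod{2^k}$ for every $a\geq 1$. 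Thus each ratio $f_{2^m}^{2^k}/f_{2^{m+1}}^{2^{k-1}}$ is an integer-coefficient power series congruent to $1$ modulo $2^k$.

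For nonnegative integers $a_0,a_1$ to be chosen, I will multiply by the eta-quotient
\begin{align*}
\psi_k(q):=\left(\frac{f_1^{2^k}}{f_2^{2^{k-1}}}\right)^{a_0}\left(\frac{f_2^{2^k}}{f_4^{2^{k-1}}}\right)^{a_1},
\end{align*}
so that $F_k(q):=\overline{B}(q)\psi_k(q)$ is an integer-coefficient power series with $F_k(q)\equiv\overline{B}(q)\pmod{2^k}$. A direct computation identifies $F_k$ with the eta-quotient $\eta(z)^{r_1}\eta(2z)^{r_2}\eta(4z)^{r_4}$, where $r_1=a_0\cdot 2^k-4$, $r_2=a_1\cdot 2^k-a_0\cdot 2^{k-1}-2$, $r_4=2-a_1\cdot 2^{k-1}$, and the associated weight is $w_k=2^{k-2}(a_0+a_1)-2$. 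The next step is to pick $a_0,a_1$ and a level $N=2^\ell$ so that $F_k$ is a holomorphic modular form of positive integer weight on $\Gamma_0(N)$ with Nebentypus. Ligozat's criterion requires $\sum_\delta\delta\,r_\delta\equiv 0\pmod{24}$ (automatic, the sum being zero), $\sum_\delta(N/\delta)\,r_\delta\equiv 0\pmod{24}$ (which a routine computation shows is automatic at $N=16$ for every $k\geq 2$), and non-negativity of the order of $F_k$ at every cusp $c/d$ with $d\mid N$.

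The principal technical obstacle is the cusp condition at $d=2$: with $a_1=0$ the order there is strictly negative, and the second factor of $\psi_k$ is included precisely to add the positive quantity needed to correct this. Choosing $a_1\geq 1$ (and $a_0$ large enough to make $w_k$ a positive integer) will force all cusp orders to be simultaneously non-negative; for instance, $k=2$, $a_0=3$, $a_1=1$ gives $F_2=\eta(z)^8\eta(2z)^{-4}$, a weight-$2$ holomorphic form on $\Gamma_0(4)$. Once $F_k\in M_{w_k}(\Gamma_0(N),\chi_k)$ has been exhibited in this fashion, Serre's lacunarity theorem (also invoked in~\cite{ono2,bringman,lin4}) gives $\#\{n\leq X:[q^n]F_k\not\equiv 0\pmod{2^k}\}=o(X)$, and combining this with $[q^n]F_k\equiv\overline{b}(n)\pmod{2^k}$ yields the density statement of Theorem~\ref{thm3}.
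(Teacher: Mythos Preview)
Your approach is correct and is essentially the same as the paper's: reduce $\overline{B}(q)$ modulo $2^k$ to the Fourier expansion of a holomorphic integer-weight eta-quotient by multiplying with factors of the shape $f_a^{2^k}/f_{2a}^{2^{k-1}}\equiv 1\pmod{2^k}$, and then invoke Serre's theorem on the lacunarity of modular-form coefficients modulo $m$.

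The only cosmetic differences are in the packaging. The paper first replaces $q$ by $q^{24}$, uses a single correction factor $\eta(48z)^{2^k}/\eta(96z)^{2^{k-1}}$ (your $a_0=0$, $a_1=1$), and lands in $M_{2^{k-2}-2}(\Gamma_0(384))$ for $k\geq 4$, handling $k<4$ a posteriori. You instead work directly in the variable $q$, introduce the extra $(f_1^{2^k}/f_2^{2^{k-1}})^{a_0}$ factor, and thereby gain enough freedom to force holomorphy at every cusp (in particular at $d=2$, as you note) already on a $2$-power level and for smaller $k$. Both routes lead to the same Serre endgame; your two-parameter trick is a mild generalisation that trades the $q\mapsto q^{24}$ rescaling for an additional eta-ratio, but the underlying idea is identical.
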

In \cite{kim2}, Kim studied the overpartion analog of cubic partition function. He defined the overcubic partition function 
$\overline{a}(n)$ whose generating function is given by 
\begin{align}\label{gen-oc}
	\overline{A}(q):=\sum_{n=0}^{\infty}\overline{a}(n)q^n&=\frac{(-q; q)_{\infty}(-q^2; q^2)_{\infty}}{(q; q)_{\infty}(q^2; q^2)_{\infty}}.
\end{align}
We also prove that the number of overcubic partitions of $n$ is divisible by $2^k$ for almost all $n$. 
\begin{thm} \label{thm4} Let $k$ be a positive integer. Then $\overline{a}(n)$ is almost always divisible by $2^k$, namely,
		\begin{align*}
	\lim_{X\to\infty} \frac{\# \left\{n\leq X: \overline{a}(n)\equiv 0\pmod{2^k}\right\}}{X}=1.
	\end{align*}
\end{thm}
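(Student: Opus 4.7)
My plan is to construct, for each positive integer $k$, a holomorphic modular form $f_k$ of positive integer weight on some $\Gamma_0(2^r)$ whose $q$-expansion satisfies $f_k(\tau) \equiv \sum \overline{a}(n) q^n \pmod{2^k}$, and then to invoke Serre's classical theorem on the arithmetic density of Fourier coefficients of integer-weight holomorphic modular forms that are divisible by a fixed integer.

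Rewrite
\[
\overline{A}(q) = \frac{(q^4;q^4)_\infty}{(q;q)_\infty^2(q^2;q^2)_\infty} = \frac{\eta(4\tau)}{\eta(\tau)^2\eta(2\tau)}.
\]
Using the classical $2$-adic congruence $(q^m;q^m)_\infty^{2^k} \equiv (q^{2m};q^{2m})_\infty^{2^{k-1}} \pmod{2^k}$ (proved by induction from $(1-q)^2 \equiv 1-q^2 \pmod 2$), the ratios $R_i(q) := (q^{2^i};q^{2^i})_\infty^{2^k}(q^{2^{i+1}};q^{2^{i+1}})_\infty^{-2^{k-1}}$ are all $\equiv 1 \pmod{2^k}$. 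Consequently, for any integer exponents $\alpha_0,\ldots,\alpha_{r-1}$, the eta-quotient
\[
f_k(\tau) := \overline{A}(q)\cdot\prod_{i=0}^{r-1}R_i(q)^{\alpha_i} = \prod_{\delta \mid 2^r}\eta(\delta\tau)^{R_\delta}
\]
satisfies $f_k(\tau) \equiv \overline{A}(q) \pmod{2^k}$ automatically, leaving the exponents $\alpha_i$ free as tuning parameters.

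The crux is to choose $r$ and the $\alpha_i$ so that $f_k$ is a holomorphic modular form of positive integer weight on $\Gamma_0(2^r)$, via Ligozat's criterion. The first Ligozat condition $\sum_\delta \delta R_\delta \equiv 0 \pmod{24}$ is automatic, since both $\overline{A}$ and each $R_i$ individually contribute $0$ to this sum. The second condition $\sum_\delta (2^r/\delta) R_\delta \equiv 0 \pmod{24}$ becomes a linear congruence on the $\alpha_i$'s that is readily solvable once $r$ is large enough: already at $r=5$ the $\overline{A}$-contribution $-9\cdot 2^{r-2}=-72$ is $\equiv 0 \pmod{24}$, and the $R_i$-contribution has the shape $3\cdot 2^{k-2}\sum_i 2^{r-i}\alpha_i$, whose 2-adic obstruction can be eliminated by choosing $\alpha_{r-1},\alpha_{r-2},\ldots$ with appropriate residues. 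The weight $-1 + 2^{k-2}\sum_i\alpha_i$ is then made a positive integer by taking $\sum_i\alpha_i$ sufficiently large.

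The main obstacle is the remaining Ligozat condition: verifying that $f_k$ has non-negative order at every cusp of $\Gamma_0(2^r)$ via the standard cusp-order formula for eta-quotients. There are eight cusps at level $32$, and one must check that the exponents $R_\delta$---already constrained by the two divisibility conditions and by positivity of the weight---also yield a non-negative linear combination at each cusp; if not, $r$ must be enlarged and the $\alpha_i$'s re-chosen. Once $f_k$ is established as a holomorphic modular form of positive integer weight with integer Fourier coefficients, Serre's theorem immediately gives that the density of $n$ for which the $n$-th Fourier coefficient of $f_k$ is divisible by $2^k$ equals $1$; combined with $f_k(\tau)\equiv\sum\overline{a}(n)q^n\pmod{2^k}$, this proves Theorem~\ref{thm4}.
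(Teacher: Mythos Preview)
Your strategy is exactly the paper's: multiply $\overline{A}$ by an eta-quotient that is $\equiv 1\pmod{2^k}$ so as to land in a space of holomorphic integer-weight forms, and then invoke Serre's theorem. The paper, however, does not introduce a whole family $\prod_i R_i^{\alpha_i}$ with parameters to be tuned; it simply takes a \emph{single} factor
\[
F_k(z)=\frac{\eta(48z)^{2^k}}{\eta(96z)^{2^{k-1}}}\qquad(\text{i.e.\ your }R_1\text{ with }\alpha_1=1\text{ after }q\mapsto q^{24}),
\]
so that $A_k(z):=\overline{A}(q^{24})F_k(z)=\eta(48z)^{2^k-1}\eta(24z)^{-2}\eta(96z)^{-(2^{k-1}-1)}$, and checks directly via the eta-quotient criteria that $A_k\in M_{2^{k-2}-1}\!\bigl(\Gamma_0(768)\bigr)$ for $k\ge 3$. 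No further parameters are needed.

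The genuine gap in your write-up is the step you yourself flag as ``the main obstacle'': you never actually produce a specific $f_k$ and verify non-negativity at every cusp; you only say that if it fails one should enlarge $r$ and re-choose the $\alpha_i$. That is a promise, not a proof. In fact the simplest choice already works: in your unscaled variables, $f_k(\tau)=\eta(2\tau)^{2^k-1}\eta(\tau)^{-2}\eta(4\tau)^{-(2^{k-1}-1)}$ on $\Gamma_0(32)$ satisfies both Ligozat congruences, has weight $2^{k-2}-1$, and a short computation with the cusp-order formula gives orders $2^{k-1}-3$, $2^{k-1}-1$, $0$, $0$, $0$, $0$ at the cusps with $d=1,2,4,8,16,32$, all non-negative for $k\ge 3$. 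Writing this down (or, equivalently, the paper's $q\mapsto q^{24}$ version on $\Gamma_0(768)$) completes the argument; the extra $\alpha_i$'s only obscure it.
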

\section{Proof of Theorems \ref{thm1} and \ref{thm2}}
We define the following matrix groups: 
\begin{align*}
\Gamma & :=\left\{\begin{bmatrix}
a  &  b \\
c  &  d      
\end{bmatrix}: a, b, c, d \in \mathbb{Z}, ad-bc=1
\right\},\\
\Gamma_{\infty} & :=\left\{
\begin{bmatrix}
1  &  n \\
0  &  1      
\end{bmatrix} \in \Gamma : n\in \mathbb{Z}  \right\}.
\end{align*}
For a positive integer $N$, let 
$$\Gamma_{0}(N) :=\left\{
\begin{bmatrix}
a  &  b \\
c  &  d      
\end{bmatrix} \in \Gamma : c\equiv~0\pmod N \right\}.$$
The index of $\Gamma_{0}(N)$ in $\Gamma$ is
\begin{align*}
 [\Gamma : \Gamma_0(N)] = N\prod_{p|N}(1+p^{-1}), 
\end{align*}
where $p$ is a prime divisor of $N$.
\par 
We prove Theorems \ref{thm1} and \ref{thm2} using the approach developed in \cite{radu1, radu2}. We now recall some of the definitions and results 
from \cite{radu1, radu2} which will be used to prove our results. Also, see \cite{wang}. 
For a positive integer $M$, let $R(M)$ be the set of integer sequences $r=(r_\delta)_{\delta|M}$ indexed by the positive divisors of $M$. 
If $r \in R(M)$ and $1=\delta_1<\delta_2< \cdots <\delta_k=M$ 
are the positive divisors of $M$, we write $r=(r_{\delta_1}, \ldots, r_{\delta_k})$. Define $c_r(n)$ by 
\begin{align}
\sum_{n=0}^{\infty}c_r(n)q^n:=\prod_{\delta|M}(q^{\delta};q^{\delta})^{r_{\delta}}_{\infty}=\prod_{\delta|M}\prod_{n=1}^{\infty}(1-q^{n \delta})^{r_{\delta}}.
\end{align}
The approach to proving congruences for $c_r(n)$ developed by Radu \cite{radu1, radu2} reduces the number of coefficients that one must check as compared with the classical method which uses Sturm's bound alone.
\par 
Let $m$ be a positive integer. For any integer $s$, let $[s]_m$ denote the residue class of $s$ in $\mathbb{Z}_m:= \mathbb{Z}/ {m\mathbb{Z}}$. 
Let $\mathbb{Z}_m^{*}$ be the set of all invertible elements in $\mathbb{Z}_m$. Let $\mathbb{S}_m\subseteq\mathbb{Z}_m$  be the set of all squares in $\mathbb{Z}_m^{*}$. For $t\in\{0, 1, \ldots, m-1\}$
and $r \in R(M)$, we define a subset $P_{m,r}(t)\subseteq\{0, 1, \ldots, m-1\}$ by
\begin{align*}
P_{m,r}(t):=\left\{t': \exists [s]_{24m}\in \mathbb{S}_{24m} ~ \text{such} ~ \text{that} ~ t'\equiv ts+\frac{s-1}{24}\sum_{\delta|M}\delta r_\delta \pmod{m} \right\}.
\end{align*}
\begin{defn}
	Suppose $m, M$ and $N$ are positive integers, $r=(r_{\delta})\in R(M)$ and $t\in \{0, 1, \ldots, m-1\}$. Let $k=k(m):=\gcd(m^2-1,24)$ and write  
	\begin{align*}
	\prod_{\delta|M}\delta^{|r_{\delta}|}=2^s\cdot j,
	\end{align*}
	where $s$ and $j$  are nonnegative integers with $j$ odd. The set $\Delta^{*}$ consists of all tuples $(m, M, N, (r_{\delta}), t)$ satisfying these conditions and all of the following.
	\begin{enumerate}
		\item Each prime divisor of $m$ is also a divisor of $N$.
		\item $\delta|M$ implies $\delta|mN$ for every $\delta\geq1$ such that $r_{\delta} \neq 0$.
		\item $kN\sum_{\delta|M}r_{\delta} mN/\delta \equiv 0 \pmod{24}$.
		\item $kN\sum_{\delta|M}r_{\delta} \equiv 0 \pmod{8}$.  
		\item  $\frac{24m}{\gcd{(-24kt-k{\sum_{{\delta}|M}}{\delta r_{\delta}}},24m)}$ divides $N$.
		\item If $2|m$, then either $4|kN$ and $8|sN$ or $2|s$ and $8|(1-j)N$.
	\end{enumerate}
\end{defn}
Let $m, M, N$ be positive integers. For $\gamma=
\begin{bmatrix}
	a  &  b \\
	c  &  d     
\end{bmatrix} \in \Gamma$, $r\in R(M)$ and $r'\in R(N)$, set 
	\begin{align*}
	p_{m,r}(\gamma):=\min_{\lambda\in\{0, 1, \ldots, m-1\}}\frac{1}{24}\sum_{\delta|M}r_{\delta}\frac{\gcd^2(\delta a+ \delta k\lambda c, mc)}{\delta m}
	\end{align*}
and 
	\begin{align*}
	p_{r'}^{*}(\gamma):=\frac{1}{24}\sum_{\delta|N}r'_{\delta}\frac{\gcd^2(\delta, c)}{\delta}.
	\end{align*}
	
	\begin{lem}\label{lem1}\cite[Lemma 4.5]{radu1} Let $u$ be a positive integer, $(m, M, N, r=(r_{\delta}), t)\in\Delta^{*}$ and $r'=(r'_{\delta})\in R(N)$. 
	Let $\{\gamma_1,\gamma_2, \ldots, \gamma_n\}\subseteq \Gamma$ be a complete set of representatives of the double cosets of $\Gamma_{0}(N) \backslash \Gamma/ \Gamma_\infty$. 
	Assume that $p_{m,r}(\gamma_i)+p_{r'}^{*}(\gamma_i) \geq 0$ for all $1 \leq i \leq n$. Let $t_{min}=\min_{t' \in P_{m,r}(t)} t'$ and
	\begin{align*}
	\nu:= \frac{1}{24}\left\{ \left( \sum_{\delta|M}r_{\delta}+\sum_{\delta|N}r'_{\delta}\right)[\Gamma:\Gamma_{0}(N)] -\sum_{\delta|N} \delta r'_{\delta}\right\}-\frac{1}{24m}\sum_{\delta|M}\delta r_{\delta} 
	- \frac{ t_{min}}{m}.
 	\end{align*}	
	If the congruence $c_r(mn+t')\equiv0\pmod u$ holds for all $t' \in P_{m,r}(t)$ and $0\leq n\leq \lfloor\nu\rfloor$, then it holds for all $t'\in P_{m,r}(t)$ and $n\geq0$.
	\end{lem}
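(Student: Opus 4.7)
The plan is to reduce the infinitely many congruences $c_r(mn+t')\equiv 0\pmod{u}$, for $n\ge 0$ and $t'\in P_{m,r}(t)$, to a Sturm-bound verification by realising the relevant sieved generating series as the Fourier expansion of a single holomorphic modular form on $\Gamma_0(N)$. To set this up, I would multiply the original eta-like product $\prod_{\delta|M}(q^\delta;q^\delta)_\infty^{r_\delta}$ by a companion piece $\prod_{\delta|N}(q^\delta;q^\delta)_\infty^{r'_\delta}$ together with an appropriate rational power of $q$. The role of the companion piece is to provide the extra slack needed to turn an a priori meromorphic product into a function that is holomorphic at every cusp of $\Gamma_0(N)$, while not altering which progression $mn+t'$ is being extracted.

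Next, I would apply the sieving operator associated with the squares $\mathbb{S}_{24m}\subseteq \mathbb{Z}_{24m}^{*}$: under the transformation law of the Dedekind eta function, acting by an element of $\mathrm{SL}_2(\mathbb Z)$ whose lower-right entry encodes $s\in\mathbb{S}_{24m}$ sends the residue class $t\pmod m$ to $ts+\frac{s-1}{24}\sum_{\delta|M}\delta r_{\delta}\pmod m$, which is exactly the recipe defining $P_{m,r}(t)$. Summing the images over $\mathbb{S}_{24m}$ produces a single function
\[
F(\tau) \ :=\ \left(\sum_{t'\in P_{m,r}(t)}\sum_{n\ge 0} c_r(mn+t')\,q^{n+(t'+c_0)/m}\right)\prod_{\delta|N}(q^\delta;q^\delta)_\infty^{r'_\delta},
\]
with $c_0 := -\tfrac{1}{24}\sum_{\delta|M}\delta r_\delta$. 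The conditions $(1)$–$(6)$ defining $\Delta^{*}$ are precisely what is needed to verify that the slash action of every $\gamma\in \Gamma_0(N)$ permutes the sieved summands of $F$ back to themselves, so that $F$ is a modular form of weight $\tfrac12\sum_\delta (r_\delta+r'_\delta)$ on $\Gamma_0(N)$ with trivial multiplier. This modularity check, which combines the full eta transformation formula with the congruences in $(3)$–$(6)$, is in my view the chief technical obstacle of the argument.

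Once modularity is in hand, the hypothesis $p_{m,r}(\gamma_i)+p^*_{r'}(\gamma_i)\ge 0$ taken over a full set $\{\gamma_1,\dots,\gamma_n\}$ of double-coset representatives for $\Gamma_0(N)\backslash\Gamma/\Gamma_\infty$ — equivalently, a set of cusp representatives of $\Gamma_0(N)$ — guarantees that $F$ is holomorphic at every cusp: the two summands compute the orders of vanishing there of the sieved eta-quotient and of the companion eta-quotient, respectively. Sturm's theorem then asserts that $F\equiv 0\pmod u$ once its first $\lfloor \tfrac{k_0}{12}[\Gamma:\Gamma_0(N)]\rfloor$ Fourier coefficients vanish modulo $u$, where $k_0$ is the weight. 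A direct bookkeeping — subtracting the contribution $\tfrac{1}{24}\sum_{\delta|N}\delta r'_\delta$ coming from the leading exponent of the companion piece and accounting for the shift $-\tfrac{1}{24m}\sum_{\delta|M}\delta r_\delta - t_{\min}/m$ between the $F$-expansion and the raw $c_r(mn+t')$-expansions — identifies this Sturm bound with the explicit quantity $\lfloor \nu\rfloor$ appearing in the lemma. Thus verifying $c_r(mn+t')\equiv 0\pmod u$ for $0\le n\le \lfloor\nu\rfloor$ and all $t'\in P_{m,r}(t)$ forces $F\equiv 0\pmod u$, and reading off the Fourier coefficients delivers the infinitely many congruences.
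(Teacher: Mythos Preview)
The paper does not prove this lemma at all: it is quoted verbatim as \cite[Lemma~4.5]{radu1} and used as a black box. Your sketch is therefore not competing with any argument in the paper; rather, you have outlined the proof that appears in Radu's original article. The outline is accurate in spirit --- build a sieved eta-quotient, use the $\Delta^{*}$ conditions to secure $\Gamma_0(N)$-modularity, use the $p_{m,r}+p^{*}_{r'}\ge 0$ hypotheses for holomorphy at the cusps, and finish with Sturm --- so there is no gap to flag, but there is also nothing in the present paper to compare it against.
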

	To apply the above lemma, we need the following result which gives us a complete set of representatives of the double coset in  
	$\Gamma_{0}(N) \backslash \Gamma/ \Gamma_\infty$. 
	\begin{lem}\label{lem2}\cite[Lemma 4.3]{wang} If $N$ or $\frac{1}{2}N$ is a square-free integer, then
		\begin{align*}
		\bigcup_{\delta|N}\Gamma_0(N)\begin{bmatrix}
		1  &  0 \\
		\delta  &  1      
		\end{bmatrix}\Gamma_ {\infty}=\Gamma.
		\end{align*}
	\end{lem}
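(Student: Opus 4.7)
The plan is to reinterpret the double coset space $\Gamma_0(N)\backslash\Gamma/\Gamma_\infty$ as the set of cusps of $\Gamma_0(N)$, and then verify that, under the stated hypothesis, the $\tau(N)$ matrices $\bigl[\begin{smallmatrix}1 & 0 \\ \delta & 1\end{smallmatrix}\bigr]$ exhaust them. Since right $\Gamma_\infty$-cosets in $\Gamma$ are determined by the image of $\infty$ (a matrix $\bigl[\begin{smallmatrix}a & b \\ c & d\end{smallmatrix}\bigr]$ sends $\infty$ to the reduced fraction $a/c\in\mathbb{P}^1(\mathbb{Q})$), the double cosets are in bijection with the $\Gamma_0(N)$-orbits on $\mathbb{P}^1(\mathbb{Q})$, i.e., the cusps of $\Gamma_0(N)$. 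Under this correspondence, $\bigl[\begin{smallmatrix}1 & 0 \\ \delta & 1\end{smallmatrix}\bigr]$ maps to the cusp represented by $1/\delta$.

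Next, I would invoke the standard classification of cusps of $\Gamma_0(N)$: a cusp $a/c$ (in lowest terms, with the convention $\infty=1/0$) has a well-defined divisor invariant $d=\gcd(c,N)$, and the number of $\Gamma_0(N)$-inequivalent cusps with invariant $d$ equals $\phi(\gcd(d,N/d))$. In particular, if $\gcd(\delta,N/\delta)=1$ for every $\delta\mid N$, then each divisor of $N$ contributes exactly one cusp, so the total number of cusps is $\tau(N)$. Since $1/\delta$ has invariant $\gcd(\delta,N)=\delta$, the $\tau(N)$ matrices appearing in the lemma then lie in $\tau(N)$ distinct double cosets and together cover all of $\Gamma$.

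The main remaining step, where I expect the work to concentrate, is the arithmetic check that both hypotheses force $\gcd(\delta,N/\delta)=1$ for every $\delta\mid N$. When $N$ itself is square-free this is immediate, since any prime appearing in $\delta$ appears to the first power in $N$ and hence not in $N/\delta$. When $N=2m$ with $m$ odd and square-free, I would split into the two cases $\delta\mid m$ and $\delta=2\delta_0$ with $\delta_0\mid m$, and in each case verify directly, using the square-freeness of $m$ together with the location of the factor of $2$, that $\delta$ and $N/\delta$ share no common prime. Combined with the cusp-equivalence classification recalled above, this bookkeeping yields the claimed decomposition $\bigcup_{\delta\mid N}\Gamma_0(N)\bigl[\begin{smallmatrix}1 & 0 \\ \delta & 1\end{smallmatrix}\bigr]\Gamma_\infty=\Gamma$. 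The only genuinely delicate part of the argument is invoking the cusp-equivalence criterion cleanly; everything else is a repackaging of standard facts about $\Gamma_0(N)$.
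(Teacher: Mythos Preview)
The paper does not prove this lemma; it is simply quoted from \cite[Lemma~4.3]{wang}. Your strategy---identifying $\Gamma_0(N)\backslash\Gamma/\Gamma_\infty$ with the cusp set of $\Gamma_0(N)$ and then invoking the classical count $\sum_{\delta\mid N}\phi\bigl(\gcd(\delta,N/\delta)\bigr)$---is the standard route and is essentially correct.

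There is, however, a gap in your arithmetic case analysis. You reduce the second hypothesis to ``$N=2m$ with $m$ odd and square-free'', but the lemma only assumes that $\tfrac12 N$ is a square-free \emph{integer}; nothing forces it to be odd. Concretely, $N=12$ (precisely the level used later in the proof of Theorem~\ref{thm2}) has $\tfrac12 N=6$ square-free, yet for $\delta=2$ one finds $\gcd(\delta,N/\delta)=\gcd(2,6)=2$. So the intermediate claim ``$\gcd(\delta,N/\delta)=1$ for every $\delta\mid N$'' is false in this regime, and your two-case split does not cover it.

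The repair is easy once noticed: what the counting actually requires is $\phi\bigl(\gcd(\delta,N/\delta)\bigr)=1$, i.e.\ $\gcd(\delta,N/\delta)\in\{1,2\}$. When $N$ is square-free this holds with value $1$. In the remaining case $N=4m$ with $m$ odd and square-free (so $\tfrac12 N=2m$ is even and square-free), write $\delta=2^a\delta_0$ with $a\in\{0,1,2\}$ and $\delta_0\mid m$; then the odd parts $\delta_0$ and $m/\delta_0$ of $\delta$ and $N/\delta$ are coprime by square-freeness of $m$, while the $2$-parts $2^a$ and $2^{2-a}$ have $\gcd$ equal to $1$ or $2$. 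Thus $\gcd(\delta,N/\delta)\le 2$ in every case, each divisor class contributes exactly one cusp, and your pigeonhole argument then goes through unchanged.
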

\begin{proof}[Proof of Theorem \ref{thm1}]
 We have
 \begin{align*}
  \sum_{n=0}^{\infty}{b}(n)q^n=\frac{1}{(q; q)_{\infty}^2(q^2; q^2)_{\infty}^2}.
 \end{align*}
Using binomial theorem, we have 
\begin{align*}
	\sum_{n=0}^{\infty}{b}(n)q^n=\frac{1}{(q; q)_{\infty}^2(q^2; q^2)_{\infty}^2}&\equiv \frac{(q; q)_{\infty}^{49}}{(q; q)_{\infty}^2(q^2; q^2)_{\infty}^2(q^7; q^7)_{\infty}^7}\\
	&\equiv \frac{(q; q)_{\infty}^{47}}{(q^2; q^2)_{\infty}^2(q^7; q^7)_{\infty}^7}\pmod{49}.
\end{align*}
 We choose $(m,M,N,r,t)=(49,14,14,(47,-2,-7,0),37)$ and it is easy to verify that $(m,M,N,r,t) \in \Delta^{*}$ and $P_{m,r}(t)=\{37\}$.
By Lemma \ref{lem2}, we know that $\left\{\begin{bmatrix}
	1  &  0 \\
	\delta  &  1      
	\end{bmatrix}:\delta|14 \right\}$ forms a complete set of double coset representatives of $\Gamma_{0}(N) \backslash \Gamma/ \Gamma_\infty$.
	Let $\gamma_{\delta}=\begin{bmatrix}
	1  &  0 \\
	\delta  &  1      
	\end{bmatrix}$. Let $r'=(12,0,0,0)\in R(14)$ and we use $Sage$ to verify that
	$p_{m,r}(\gamma_{\delta})+p_{r'}^{*}(\gamma_{\delta}) \geq 0$ for each $\delta | N$. We compute that the upper bound in Lemma \ref{lem1} is $\lfloor\nu\rfloor=48$. 
	Using $Mathematica$ we verify that
	$b(49n+37)\equiv0\pmod{49}$ for $n\leq48$. For example:
	    \begin{align*} b(&49\times48+37)\\
	    	=&2547081709226856352575091322454383557126791567591335\\&339788372343399013917787449842071480\\
		    =&49 \times 519812593719766602566345167847833379005467666855\\&37455914048415171408447301784690654520.
		\end{align*}
	Thus, by Lemma \ref{lem1}, we conclude that $b(49n+37)\equiv0\pmod{49}$ for any $n\geq0$.\\
\end{proof}
\begin{proof}[Proof of Theorem \ref{thm2}]
We first recall the following $2$-dissection formula from \cite[ p. 40, Entry 25]{berndt}:
	\begin{align}\label{two-dissection}
	\frac{1}{(q; q)_{\infty}^4}=\frac{(q^4; q^4)_{\infty}^{14}}{(q^2; q^2)_{\infty}^{14}(q^8; q^8)_{\infty}^4}
	+4q\frac{(q^4; q^4)_{\infty}^2(q^8; q^8)_{\infty}^4}{(q^2; q^2)_{\infty}^{10}}.
	\end{align}
Employing \eqref{two-dissection} into \eqref{gen-ocp} and using the fact that $(-q; q)_{\infty}=\displaystyle\frac{(q^2; q^2)_{\infty}}{(q; q)_{\infty}}$, we have
\begin{align*}
\sum_{n=0}^{\infty}\overline{b}(n)q^n&=\frac{(q^4; q^4)_{\infty}^2}{(q; q)_{\infty}^4(q^2; q^2)_{\infty}^2}\\
&=\frac{(q^4; q^4)_{\infty}^2}{(q^2; q^2)_{\infty}^2}\left( \frac{(q^4; q^4)_{\infty}^{14}}{(q^2; q^2)_{\infty}^{14}(q^8; q^8)_{\infty}^4}
+4q\frac{(q^4; q^4)_{\infty}^2(q^8; q^8)_{\infty}^4}{(q^2; q^2)_{\infty}^{10}}\right).
\end{align*}
Extracting the terms containing $2n$ and then using \eqref{two-dissection}, we obtain
\begin{align*}
&\sum_{n=0}^{\infty}\overline{b}(2n)q^n\\
&=\frac{(q^2; q^2)_{\infty}^{16}}{(q; q)_{\infty}^{16}(q^4; q^4)_{\infty}^4}\\
&=\frac{(q^4; q^4)_{\infty}^{52}}{(q^2; q^2)_{\infty}^{40}(q^8; q^8)_{\infty}^{16}}+16q\frac{(q^4; q^4)_{\infty}^{40}}{(q^2; q^2)_{\infty}^{36}(q^8; q^8)_{\infty}^{8}}+96q^2\frac{(q^4; q^4)_{\infty}^{28}}{(q^2; q^2)_{\infty}^{32}}\\
&+256q^3\frac{(q^4; q^4)_{\infty}^{16}(q^8; q^8)_{\infty}^{8}}{(q^2; q^2)_{\infty}^{28}}+256q^4\frac{(q^4; q^4)_{\infty}^{4}(q^8; q^8)_{\infty}^{16}}{(q^2; q^2)_{\infty}^{24}}\\
&\equiv\frac{(q^4; q^4)_{\infty}^{52}}{(q^2; q^2)_{\infty}^{40}(q^8; q^8)_{\infty}^{16}}+16q\frac{(q^4; q^4)_{\infty}^{40}}{(q^2; q^2)_{\infty}^{36}(q^8; q^8)_{\infty}^{8}}
+96q^2\frac{(q^4; q^4)_{\infty}^{28}}{(q^2; q^2)_{\infty}^{32}}\pmod{256}.
\end{align*}
Extracting the terms containing $2n+1$, we obtain, modulo $256$
\begin{align*}
\sum_{n=0}^{\infty}\overline{b}(4n+2)q^n
&\equiv 16\frac{(q^2; q^2)_{\infty}^{40}}{(q; q)_{\infty}^{36}(q^4; q^4)_{\infty}^{8}}\\
&=16\frac{(q^2; q^2)_{\infty}^{24}}{(q; q)_{\infty}^{4}(q^4; q^4)_{\infty}^{8}}\left(\frac{(q^2; q^2)_{\infty}^{16}}{(q; q)_{\infty}^{32}}\right)\\
&\equiv 16\frac{(q^2; q^2)_{\infty}^{24}}{(q; q)_{\infty}^{4}(q^4; q^4)_{\infty}^{8}}\\
&\equiv 16\frac{(q^2; q^2)_{\infty}^{24}}{(q^4; q^4)_{\infty}^{8}}\left( \frac{(q^4; q^4)_{\infty}^{14}}{(q^2; q^2)_{\infty}^{14}(q^8; q^8)_{\infty}^4}
+4q\frac{(q^4; q^4)_{\infty}^2(q^8; q^8)_{\infty}^4}{(q^2; q^2)_{\infty}^{10}}\right).
\end{align*}
Finally, extracting the terms containing $2n$, we obtain
\begin{align}\label{c}
\sum_{n=0}^{\infty}\overline{b}(8n+2)q^n
&\equiv 16\frac{(q; q)_{\infty}^{10}(q^2; q^2)_{\infty}^{6}}{(q^4; q^4)_{\infty}^{4}}\pmod{256}.
\end{align}
\par Now, we choose $(m,M,N,r,t)=(9,8,12,(10,6,-4,0),5)$ and it is easy to verify that $(m,M,N,r,t) \in \Delta^{*}$ and $P_{m,r}(t)=\{5,8\}$.
   From  Lemma \ref{lem2} we know that $\left\{\begin{bmatrix}
	1  &  0 \\
	\delta  &  1      
	\end{bmatrix}:\delta|12 \right\}$ forms a complete set of double coset representatives of $\Gamma_{0}(N) \backslash \Gamma/ \Gamma_\infty$.
	Let $r'=(0,0,0,0,0,0)\in R(12)$ and we use $Sage$ to verify that
	$p_{m,r}(\gamma_{\delta})+p_{r'}^{*}(\gamma_{\delta}) \geq 0$ for each $\delta | N$, where $\gamma_{\delta}=\begin{bmatrix}
	1  &  0 \\
	\delta  &  1      
	\end{bmatrix}$. We compute that the upper bound in Lemma \ref{lem1} is $\lfloor\nu\rfloor=11$. 
	Using $Mathematica$ we verify that
	$\overline{b}(72n+42) \equiv 0 \pmod{256}$  and $\overline{b}(72n+66) \equiv 0 \pmod{256}$ for $n \leq 11$. For example:

		\begin{align*} 
			&\overline{b}(72\times11+42)\\
			&=7761203316486412630224281570704445214150462692805613719833344\\
			&= 256 \times 30317200455025049336813599885564239117775244893771928593099
		\end{align*}
		and
	\begin{align*} 
	&\overline{b}(72\times11+66)\\
	&=519085767900724771159507478715308091342334377184238781729434112\\
		&= 256 \times 2027678780862206137341826088731672231805993660875932741130602.
	\end{align*}
	Thus, by Lemma \ref{lem1}, we conclude that  $\overline{b}(72n+t)\equiv0\pmod{256}$ for any $n\geq0$, where $t\in \{42,66\}$. This completes the proof of the theorem.
	\end{proof}
	
\section{Proof of Theorems \ref{thm3} and \ref{thm4}}
Recall that the Dedekind's eta-function $\eta(z)$ is defined by
\begin{align*}
	\eta(z):=q^{1/24}(q;q)_{\infty}=q^{1/24}\prod_{n=1}^{\infty}(1-q^n),
\end{align*}
where $q:=e^{2\pi iz}$ and $z$ is in the upper half complex plane. A function $f(z)$ is called an eta-quotient if it is of the form
\begin{align*}
f(z)=\prod_{\delta|N}\eta(\delta z)^{r_\delta},
\end{align*}
where $N$ is a positive integer and $r_{\delta}$ is an integer. We now recall two theorems from \cite[p. 18]{ono} which will be used to prove our result.

\begin{thm}\cite[Theorem 1.64 and Theorem 1.65]{ono}\label{thm_ono1} If $f(z)=\prod_{\delta|N}\eta(\delta z)^{r_\delta}$ 
is an eta-quotient with $\ell=\frac{1}{2}\sum_{\delta|N}r_{\delta}\in \mathbb{Z}$, 
with the additional properties that
	$$\sum_{\delta|N} \delta r_{\delta}\equiv 0 \pmod{24}$$ and
	$$\sum_{\delta|N} \frac{N}{\delta}r_{\delta}\equiv 0 \pmod{24},$$
	then $f(z)$ satisfies $$f\left( \frac{az+b}{cz+d}\right)=\chi(d)(cz+d)^{\ell}f(z)$$
	for every  $\begin{bmatrix}
		a  &  b \\
		c  &  d      
	\end{bmatrix} \in \Gamma_0(N)$. Here the character $\chi$ is defined by $\chi(d):=\left(\frac{(-1)^{\ell} \prod_{\delta |N}\delta^{r_{\delta}}}{d}\right)$. 
	 In addition, if $c, d,$ and $N$ are positive integers with $d|N$ and $\gcd(c, d)=1$, then the order of vanishing of $f(z)$ at the cusp $\frac{c}{d}$ 
	is $\frac{N}{24}\sum_{\delta|N}\frac{\gcd(d,\delta)^2r_{\delta}}{\gcd(d,\frac{N}{d})d\delta}$.
\end{thm}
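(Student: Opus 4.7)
The plan is to reduce everything to the classical modular transformation of the Dedekind eta function. First I would recall the two generating identities $\eta(z+1)=e^{\pi i/12}\eta(z)$ and $\eta(-1/z)=\sqrt{-iz}\,\eta(z)$, which combine into the full Petersson formula
\[
\eta\!\left(\frac{az+b}{cz+d}\right)=\varepsilon(a,b,c,d)(cz+d)^{1/2}\eta(z),
\]
valid for every $\begin{bmatrix}a&b\\c&d\end{bmatrix}\in\Gamma$ with $c>0$, where $\varepsilon$ is an explicit $24$th root of unity expressible through a Dedekind sum $s(d,c)$.

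Next, for each divisor $\delta\mid N$ and each $\gamma=\begin{bmatrix}a&b\\c&d\end{bmatrix}\in\Gamma_0(N)$, I would transfer the formula to $\eta(\delta z)$. Since $N\mid c$ forces $\delta\mid c$, the matrix $\gamma_\delta=\begin{bmatrix}a&\delta b\\c/\delta&d\end{bmatrix}$ lies in $\Gamma$ and satisfies $\delta\cdot\gamma z=\gamma_\delta\cdot(\delta z)$, so
\[
\eta(\delta\gamma z)=\varepsilon\!\left(a,\delta b,\tfrac{c}{\delta},d\right)(cz+d)^{1/2}\eta(\delta z).
\]
Taking the product over $\delta\mid N$ with exponents $r_\delta$ produces an automorphy factor $(cz+d)^{\ell}$, whose square-root ambiguity disappears precisely because $\ell\in\mathbb{Z}$, multiplied by the combined phase $\prod_\delta \varepsilon(a,\delta b,c/\delta,d)^{r_\delta}$. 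Using the closed form of $\varepsilon$ in terms of Dedekind sums, one separates this phase into a $b$-dependent piece, an $a$-dependent piece, and a Kronecker-symbol piece. The $b$-dependent piece is a $24$th root of unity whose exponent is a fixed integer multiple of $\sum_{\delta} \delta r_\delta$, so the first hypothesis $\sum \delta r_\delta\equiv 0\pmod{24}$ kills it; the hypothesis $\sum (N/\delta)r_\delta\equiv 0\pmod{24}$ (together with the well-known reciprocity for $s(d,c)$) is exactly what forces the $a$-dependent piece to collapse to a quadratic character depending only on $d\bmod N$, and a direct simplification identifies the result as the Kronecker symbol $\chi(d)=\left(\frac{(-1)^\ell\prod_\delta\delta^{r_\delta}}{d}\right)$.

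For the cusp-order statement, I would pick any $\sigma\in\Gamma$ sending $\infty$ to $c/d$ and compute the leading power of the local uniformizer $q_h=e^{2\pi i z/h}$ at that cusp, where $h=N/(d\gcd(d,N/d))$ is the cusp width. A short computation rewrites $\delta\sigma z$, up to an integer translation, in the form $\gamma'\cdot\bigl(\delta z/\gcd(\delta,d)^2\bigr)$ for some $\gamma'\in\Gamma$, and applying the $\eta$-transformation to each factor shows that the $q^{1/24}$ in $\eta(\delta z)$ contributes a term proportional to $\gcd(\delta,d)^2/(\delta\cdot d\cdot\gcd(d,N/d))$ to the exponent of $q_h$. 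Summing these weighted by $r_\delta$ and normalising by $N/24$ yields the stated order formula.

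The main obstacle is the middle step: the careful bookkeeping of $24$th roots of unity and Dedekind sums needed to verify that the two divisibility conditions on $(r_\delta)$ are exactly what is required so that the combined phase becomes the clean Kronecker character $\chi(d)$ independently of $a$, $b$, and of the chosen lift in $\Gamma$. Everything else is, in essence, repeated application of the eta transformation together with routine (if tedious) manipulation of Dedekind sums.
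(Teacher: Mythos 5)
The paper contains no proof of this statement: it is quoted, with citation, from Ono's book (Theorems 1.64 and 1.65, which are themselves classical results of Gordon--Hughes, Newman, and Ligozat/Martin) and is used in Section 3 purely as an imported tool. So there is no in-paper argument to compare yours against; your proposal has to be judged as a reconstruction of the standard proof from the literature, and in outline it is exactly that: the Dedekind-sum/Petersson multiplier for $\eta$, the identity $\delta\gamma z=\gamma_\delta(\delta z)$ with $\gamma_\delta=\begin{bmatrix} a & \delta b\\ c/\delta & d\end{bmatrix}\in\Gamma$ (legitimate since $N\mid c$ forces $\delta\mid c$), multiplicativity of the automorphy factors, and a Ligozat-type local expansion at each cusp $c/d$ of width $N/\bigl(d\gcd(d,N/d)\bigr)$ for the order-of-vanishing formula. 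That route does prove the theorem.

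Two substantive caveats. First, the entire content of the first half of the theorem is the claim that the product of the $24$th-root-of-unity multipliers collapses to the single Kronecker character $\chi(d)=\left(\frac{(-1)^{\ell}\prod_{\delta\mid N}\delta^{r_\delta}}{d}\right)$ under the two congruence hypotheses; you explicitly defer this ("the main obstacle"), so as written the argument is a roadmap rather than a proof. Second, the way you propose to organize that bookkeeping is not quite right: with the explicit multiplier (Jacobi-symbol form), the $b$-dependent exponent coming from $\gamma_\delta$ is $-\delta b d\bigl((c/\delta)^2-1\bigr)$, so after summing against $r_\delta$ one gets $-bd\bigl(c^2\sum_{\delta\mid N}r_\delta/\delta-\sum_{\delta\mid N}\delta r_\delta\bigr)$, and killing this modulo $24$ requires $N\mid c$ together with \emph{both} hypotheses (write $c^2\sum_{\delta}r_\delta/\delta=c\,(c/N)\sum_{\delta}(N/\delta)r_\delta$), not the first hypothesis alone; the $a$- and $d$-dependent pieces are similarly entangled, and one must also handle $c\le 0$ and the parity cases of the multiplier formula separately. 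None of this is a fatal obstruction---it is the standard, if tedious, verification---but the decisive step is missing and the claimed clean separation of the two hypotheses is an oversimplification.
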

Suppose that $\ell$ is a positive integer and that $f(z)$ is an eta-quotient satisfying the conditions of the above theorem. 
If $f(z)$ is holomorphic at all of the cups of $\Gamma_0(N)$, 
then $f(z)\in M_{\ell}(\Gamma_0(N), \chi)$. We now use this fact for the eta-quotient $B_k$ defined by 
$$B_k(z)=\frac{\eta(48z)^{2^k-2}}{\eta(24z)^4\eta(96z)^{2^{k-1}-2}}.$$
Using Theorem \ref{thm_ono1}, we find that $B_k(z)\in M_{2^{k-2}-2}\left(\Gamma_0(384)\right)$ for $k \geq 4$. 
\par From \eqref{gen-ocp}, we can rewrite $\overline{B}(q^{24})$ as the following eta-quotient
$$\overline{B}(z)=\overline{B}(q^{24})=\frac{\eta(96z)^2}{\eta(24z)^4\eta(48z)^2}.$$
Let $$F_k(z)=\frac{\eta(48z)^{2^k}}{\eta(96z)^{2^{k-1}}}.$$
Then, we have $$B_k(z)=\frac{\eta(48z)^{2^k-2}}{\eta(24z)^4\eta(96z)^{2^{k-1}-2}}=\overline{B}(z)F_k(z).$$
It is not hard to establish the fact that $F_k(z)\equiv 1\pmod{2^k}$. Thus,
$$B_k(z)\equiv \overline{B}(z)\pmod{2^k}.$$
Now, let $m$ be a positive integer. From a deep theorem of Serre \cite[p. 43]{ono}, it follows that if $f(z)$ is an integral weight modular form in $M_k(\Gamma_0(N), \chi)$ 
which has a Fourier expansion 
	$$f(z)=\sum_{n=0}^{\infty}a(n)q^n\in \mathbb{Z}[[q]],$$
	then there is a constant $\alpha>0$  such that
	$$ \# \left\{n\leq X: a(n)\not\equiv 0 \pmod{m} \right\}= \mathcal{O}\left(\frac{X}{(\log{}X)^{\alpha}}\right).$$
Since $B_k(z)\in M_{2^{k-2}-2}\left(\Gamma_0(384)\right)$ for $k \geq 4$, the Fourier coefficients of $B_k(z)$ are almost always divisible by $2^k$ 
and so are the Fourier coefficients of $\overline{B}(z)$. Now, 
$$\overline{B}(z)=\overline{B}(q^{24})=\sum_{n=0}^{\infty}\overline{b}(n)q^{24n},$$ and hence $\overline{b}(n)$ is a multiple of $2^k$ for almost all $n$ and $k\geq 4$. 
This also trivially implies that 
$\overline{b}(n)$ is a multiple of $2^k$ for almost all $n$ and $k < 4$. This completes the proof of the Theorem \ref{thm3}.
\par We now prove Theorem \ref{thm4}. The generating function of $\overline{a}(n)$ is given by 
\begin{align*}
	\overline{A}(q):=\sum_{n=0}^{\infty}\overline{a}(n)q^n&=\frac{(-q; q)_{\infty}(-q^2; q^2)_{\infty}}{(q; q)_{\infty}(q^2; q^2)_{\infty}}.
\end{align*}
Let $$A_k(z)=\frac{\eta(48z)^{2^k-1}}{\eta(24z)^2\eta(96z)^{2^{k-1}-1}}.$$
Using Theorem \ref{thm_ono1}, we find that $A_k(z)\in M_{2^{k-2}-1}\left(\Gamma_0(768)\right)$ for $k > 2$. We also have 
$$A_k(z)=\overline{A}(z)F_k(z)\equiv \overline{A}(z) \pmod{2^k}.$$
Following the proof of Theorem \ref{thm3}, we now readily arrive at the desired result. This recompletes the proof of Theorem \ref{thm4}.

\end{document}